\theoremstyle{plain}
\newtheorem{theorem}{Theorem}[section]
\newtheorem{corollary}[theorem]{Corollary}
\newtheorem{lemma}[theorem]{Lemma}
\newtheorem{proposition}[theorem]{Proposition}
\theoremstyle{definition}
\def\FF{\mathbb F}
\def\cc{\mathcal C}
\def\ff{\mathcal F}
\def\ll{\mathcal L}
\def\stb{\big |}
\def\vs{\vspace{0.05in}}
\title{Companion Lie Algebras of Leibniz Algebras}
\author[McAlister]{Allison McAlister}
\address{Department of Mathematics and Computer Science, High Point University\\
High Point, NC 27268}
\email{amcalist@highpoint.edu}
\author[Stitzinger]{Ernie Stitzinger}
\address{Department of Mathematics, North Carolina State University\\
Raleigh, NC 27695}
\email{stitz@ncsu.edu}
\author[White]{Ashley White}
\address{Department of Mathematics, North Carolina State University\\
Raleigh, NC 27695}
\email{anwalls@ncsu.edu}
\date{\today}
\begin{document}

\begin{abstract}
In extending results from Lie to Leibniz algebras, it is helpful to have techniques which translate results from the former to the latter without having to repeat the (perhaps modified) arguments. Such a technique is developed in this work, and some applications are given.

Keywords:: Leibniz algebras, companion Lie algebras

\vs

MSC 2010: 17A32

\end{abstract}

\maketitle

\section{Introduction}\label{intro}

A Leibniz algebra, L, is an algebra with multiplication defined by the (left) Leibniz identity
$$ a(bc) = (ab)c + b(ac). $$ 
This multiplication need not be antisymmetric. 
If this multiplication is, in fact, antisymmetric, then L is a Lie algebra. 
Leibniz algebras have been studied at length in \cite{loday}, \cite{barnes}, \cite{ayupovomirov}, \cite{summary}, and other works.  
In this paper, we consider L to be a finite-dimensional Leibniz algebra over an arbitrary field unless otherwise stated.      

\vs

The \textit{Frattini subalgebra} of L, denoted F(L), is the intersection of all maximal subalgebras of L. 
Note that F(L) need not be an ideal of L. 
Thus, we consider the \textit{Frattini ideal} of L, denoted $\Phi(L)$, which is the largest ideal of L contained in the Frattini subalgebra of L. 
These structures have been studied at length in \cite{fratleib}.  
It is often especially useful to consider a Leibniz algebra whose Frattini ideal is 0. 
A particularly well-known Lie algebra result that carries over to the Leibniz case is the following: The Frattini ideal of L/$\Phi$(L) is 0 for any Leibniz algebra L.

\section{Companion Lie Algebra}

Let $\ff$ be a subalgebra closed formation of solvable Leibniz algebras (see \cite{barnes} and \cite{summary} for an introduction to Leibniz algebras and \cite{barnessolv} for results on formations). 
Examples of such formations are the classes of nilpotent, supersolvable, and strongly solvable Leibniz algebras. 
Let L be a solvable Leibniz algebra that is minimally not in $\ff$ with $\phi$(L) = 0. 
If A is a minimal ideal in L, then A is complemented in L by a subalgebra. 
We claim that there is a unique minimal ideal of L. 
Suppose that A and B are minimal ideals of L. 
They are complemented by subalgebras H and K, respectively. 
H and K are both in $\ff$ by the minimality of L. 
Then L = L/(A $\cap$ B), which is in $\ff$ since $\ff$ is a formation. 
This contradicts the assumption that L does not belong to $\ff$.
Hence, L has a unique minimal ideal, A. 
Thus, Soc(L) = A.
Suppose that L is not Lie.

\vs

Let N(L) denote the nilradical of L. 
Since $\phi$(L) = 0, N(L) = A = Z$_L$(A) $\subset$ Leib(L) \cite{fratleib}. 
Furthermore, consider the mapping $\ll$: x $\mapsto$ L$_x \stb_A$. 
$\ll$ is a homomorphism from L into the derivation algebra of A. 
L is the right centralizer of A (since A $\subset$ Leib(L)), and A = Z$_L$(A).  
Hence, A is the kernel of $\ll$. 
Hence, H = L/A is Lie, and A = Leib(L). 

\vs

We construct a companion Lie algebra, C, for this Leibniz algebra, L = A + H, where A is the unique minimal ideal in L, and H is the subalgebra of L complementing A. 
Let C be the vector space direct sum of A and H with the product [a+h, b+k] = [h,b] - [k,a] + [h,k], where the products on the right-hand side of this equation are the same as in L. 
It is verified that C is a Lie algebra. 
Note that such correspondences have been considered previously \cite{Leibnizcharp}. 
There the reverse construction is developed, obtaining a Leibniz algebra from a Lie algebra. 
Their conditions and purposes are different, but the construction is the inverse of ours.

\vs

\vs

\vs

\begin{lemma}
\label{companionlie}
Let $\ff$ be a formation of solvable Leibniz algebras. Let L be minimally not in $\ff$, and suppose that $\phi$(L) = 0. Then L = A + H, where A is the unique minimal ideal in L. 

If L is not a Lie algebra, then A = Leib(L) is self-centralizing in L, and H is a Lie algebra. Then the algebra C constructed as C = A + H with product [a+h, b+k] = [h, b] - [k, a] + [h, k], where the right-hand-side products are the same as in L, is a Lie algebra. We call C the companion Lie algebra of L.
\end{lemma}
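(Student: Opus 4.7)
The preceding narrative has essentially carried out most of the work: uniqueness of the minimal ideal $A$ follows from the formation property applied to $L/(A\cap B)$, the fact $A = \text{Leib}(L)$ and $A$ is self-centralizing comes from $\Phi(L)=0$ and the map $\ll\colon x \mapsto L_x\stb_A$, and $H \cong L/A$ being Lie follows because $\text{Leib}(L) \subseteq A$. So my proof plan would open by simply compiling those observations into a clean statement, and then devote almost all space to verifying that $C$ is in fact a Lie algebra under the stated bracket.

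The bracket $[a+h,b+k] := [h,b]-[k,a]+[h,k]$ has two things to check: antisymmetry and Jacobi. For antisymmetry, I would swap the arguments to get $[b+k,a+h] = [k,a]-[h,b]+[k,h]$; since $H$ is Lie we have $[k,h]=-[h,k]$, and the remaining two terms visibly negate the original, so $[\cdot,\cdot]$ is antisymmetric (and alternating, since $[h,h]=0$ in $H$).

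The Jacobi identity is the real content. I would fix $x_i = a_i + h_i$ for $i=1,2,3$, first compute $[x_2,x_3] = ([h_2,a_3]-[h_3,a_2]) + [h_2,h_3]$, noting that $A$ is an ideal so the first piece lies in $A$ and the second in $H$. Then I would expand $[x_1,[x_2,x_3]]$ using the definition and sum cyclically, organizing the twelve resulting terms into two groups: those lying in $H$ (the single term $[h_1,[h_2,h_3]]$ and its cyclic partners) and those lying in $A$. The $H$-part vanishes by Jacobi in $H$. The $A$-part consists of six terms of the form $\pm[h_i,[h_j,a_k]]$ together with three terms $-[[h_j,h_k],a_i]$. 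Pairing the $[h_i,[h_j,a_k]]$ terms appropriately and applying the left Leibniz identity in $L$,
\[
[h_i,[h_j,a_k]] = [[h_i,h_j],a_k] + [h_j,[h_i,a_k]],
\]
collapses the six terms into $[[h_1,h_2],a_3] + [[h_2,h_3],a_1] + [[h_3,h_1],a_2]$, which cancels exactly against the three $-[[h_j,h_k],a_i]$ terms.

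The only place requiring genuine care is this bookkeeping step, since the bracket on $C$ mixes $H$- and $A$-components asymmetrically; the main obstacle is resisting the temptation to abbreviate and instead tracking all six mixed terms with their signs so that the Leibniz identity in $L$ is applied in exactly the direction that produces cancellation with the $[[h_j,h_k],a_i]$ terms. Once that verification is done, $C$ is a Lie algebra and the lemma is established.
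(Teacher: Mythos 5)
Your proposal is correct and follows essentially the same route as the paper, whose ``proof'' is the narrative preceding the lemma (uniqueness of $A$ via the formation property, $A=\mathrm{Leib}(L)$ self-centralizing via the map $x\mapsto L_x\stb_A$, and $H\cong L/A$ Lie). The only difference is that you explicitly carry out the antisymmetry and Jacobi verification that the paper dismisses with ``it is verified that $C$ is a Lie algebra,'' and your computation --- pairing $[h_i,[h_j,a_k]]-[h_j,[h_i,a_k]]=[[h_i,h_j],a_k]$ via the left Leibniz identity so as to cancel the $-[[h_j,h_k],a_i]$ terms --- is accurate.
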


\section{Strongly Solvable Leibniz Algebras}

As in the case of Lie algebras, a Leibniz algebra L is called \textit{strongly solvable} if L$^2$ is nilpotent. 
Suppose $\ff$ is the formation of strongly solvable Leibniz algebras with L, C, A, and H as in the previous section. 
Let x $\in$ L, and let $x$ also denote the corresponding element in C. 
Let $L_x$ denote left multiplication of L by x, and let ad $x$ denote left multiplication of $\cc$ by $x$ under the above bracket. 
If x is in A, then $L_x$ and ad x are both nilpotent, although perhaps not equal. 
Suppose that L is a $\phi$-free, solvable, minimally non strongly solvable Leibniz algebra. 
We claim that C also has these properties.

\vs

If x is in H$^2$, then L$_x$ and ad $x$ are essentially the same, although acting on L and C, respectively, and are either both nilpotent or both non-nilpotent. 
If all x $\in$ H$^2$ have ad $x$ nilpotent, then left multiplication by each element in the Lie set H$^2$ $\cup$ A is nilpotent on both L$^2$ and C$^2$. 
Thus, L$^2$ is nilpotent if and only if C$^2$ is nilpotent; hence, C is not strongly solvable.

\vs

A is self-centralizing in L, and hence, A is also self-centralizing in C. 
Thus, A is the unique minimal ideal in C.  
Then H is a maximal subalgebra in C and thus, contains $\phi$(C). 
We claim that $\phi$(C) = 0. 
Suppose not. 
H is a maximal subalgebra of C, and hence, $\phi$(C) $\subset$ H. 
A is the unique minimal ideal in C, and hence, A is contained in $\phi$(C), which is a contradiction. 
Thus, the claim holds.

\vs

We also claim that any proper subalgebra, K, of C is strongly solvable. 
Suppose that C = A + K. 
Then K $\cap$ A is equal to either A or 0 since A is a minimal ideal of C. 

\vs

In the former case, A is contained in K, and then K is equal to C, which contradicts K being a proper subalgebra of C.  
In the latter case, K is isomorphic to H, which is strongly solvable in L, and thus, in C. 
Thus, K is strongly solvable.

\vs

Now suppose that C $\neq $ A + K. 
We will show that A + K is strongly solvable, and hence, that K is also strongly solvable. 
Thus, we assume that A $\subseteq$ K. 

\vs

Then K = K $\cap$ (H + A) = (K $\cap$ H) + A, which is a subalgebra of L by the correspondence between L and C. 
Then K$^2$ = (K $\cap$ H)$^2$ + A in Leibniz algebra L, and analogously, K$^2$ = (K $\cap$ H)$^2$ + A  in Lie algebra C. 
In L, both summands of K$^2$ are nilpotent Lie sets, and likewise, the corresponding summands are nilpotent in C. 
Thus, K$^2$ is nilpotent as a subalgebra of C.  
Hence, K is strongly solvable for K any proper subalgebra of C. 
Since C is itself not strongly solvable, C is a solvable, minimally non-strongly solvable Lie algebra. 

\vs

Such Lie algebras have been described in Theorem 3.1 of \cite{btv}.

\vs

\vs

\vs

\begin{theorem}
\label{minimalnonstronglysolvablethm}
Let L be a solvable $\phi$-free minimally non-strongly solvable Leibniz algebra over field $\FF$. 
Then $\FF$ has characteristic p $>$ 0, and L = A + H is a semidirect sum, where
	\begin{enumerate}
	\item A is the unique minimal ideal of L, 
	\item $\dim$ A $\geq$ 2, 
	\item A$^2$ = 0, and 
	\item either H = M $\dotplus \; \FF$x, where M is a minimal abelian ideal of H, or H is the three-dimensional Heisenberg algebra.	
	\end{enumerate}

Either L is Lie, or if not, then H is Lie, A = Leib(L), and AL = 0.

\end{theorem}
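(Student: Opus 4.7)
The plan is to split into two cases depending on whether $L$ is a Lie algebra. If $L$ is Lie, the hypothesis "solvable $\phi$-free minimally non-strongly solvable Leibniz algebra" coincides with the Lie hypothesis of Theorem~3.1 of \cite{btv}, which I would invoke directly to obtain the semidirect decomposition together with (1)--(4); the final sentence of the theorem is then trivial.

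Suppose instead that $L$ is not Lie. Lemma~\ref{companionlie} and the discussion preceding it already provide $L = A + H$ as a semidirect sum, with $A$ the unique minimal ideal, $A = \mathrm{Leib}(L) = N(L) = Z_L(A)$, and $H$ a Lie subalgebra complementing $A$. Because $A \subseteq \mathrm{Leib}(L)$, left multiplication by any element of $A$ vanishes on $L$, so $AL = 0$ and in particular $A^2 = 0$. These remarks establish (1), (4), and the entire final sentence of the theorem.

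For the characteristic, dimension, and structure-of-$H$ statements I would pass to the companion Lie algebra $C = A + H$. The work in Section~3 shows that $C$ is a $\phi$-free solvable Lie algebra that is minimally not strongly solvable, so Theorem~3.1 of \cite{btv} applied to $C$ yields $\mathrm{char}\,\FF = p > 0$, $\dim A \geq 2$, and the required dichotomy for $H$ (either $H = M \dotplus \FF x$ with $M$ a minimal abelian ideal of $H$, or $H$ is the three-dimensional Heisenberg algebra). These conclusions then transfer back to $L$ because the companion bracket differs from the Leibniz bracket only on mixed $A$--$H$ slots: the bracket on $H$ inside $C$ is identical to the bracket on $H$ inside $L$, and the underlying vector space of $A$ (with its minimal-ideal status) is unchanged.

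I do not anticipate a serious obstacle here, since the structural heavy lifting has already been done by \cite{btv} and the preparatory material of Sections~2--3. The only care required is to verify cleanly that each consequence of \cite{btv} for $C$ pulls back to the corresponding statement about $L$, which uses exactly the fact that the companion construction preserves the $H$-internal bracket and the vector space structure of the minimal ideal, together with the observation that $A^2=0$ in $L$ follows from $A\subseteq \mathrm{Leib}(L)$ rather than from $A^2=0$ in $C$.
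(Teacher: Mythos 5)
Your proposal is correct and takes essentially the same route as the paper: the Lie case is handled by citing Theorem~3.1 of \cite{btv}, and the non-Lie case passes to the companion Lie algebra $C$ (already shown in Section~3 to be solvable, $\phi$-free, and minimally non-strongly solvable), applies \cite{btv} to $C$, and transfers the conclusions back to $L$ via Lemma~\ref{companionlie}. Your added observations---that $AL=0$ and hence $A^2=0$ follow directly from $A\subseteq\mathrm{Leib}(L)$, and that the transfer works because the companion bracket agrees with the Leibniz bracket on $H$ and preserves $A$ as a vector space---are correct and merely make explicit what the paper's terse proof leaves implicit.
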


\begin{proof}
If L is a Lie algebra, then the result is Theorem 3.1 of \cite{btv}. 
If L is not Lie, then the companion Lie algebra, C, satisfies the conditions and conclusions of this theorem. 
Thus, L does also by Lemma \ref{companionlie}
\end{proof}

In \cite{btv}, Bowman, Towers, and Varea prove the Lie algebra version of the next result from Theorem \ref{minimalnonstronglysolvablethm}. 
We obtain it from \cite{tworecogleib} instead. 

\vs

\vs

\begin{proposition}
\label{minnonstrsolv2genprop}
Let L be a solvable Leibniz algebra that is minimally non-strongly solvable. Then L is two-generated. 
\end{proposition}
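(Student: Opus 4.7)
My plan is to apply Theorem~\ref{minimalnonstronglysolvablethm} to obtain the structural decomposition $L = A + H$, and then exhibit two explicit generators of $L$.

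First I would note that in either of the two cases Theorem~\ref{minimalnonstronglysolvablethm} allows for $H$, the subalgebra $H$ is itself two-generated. If $H = M \dotplus \FF x$ with $M$ a minimal abelian ideal of $H$, then $M$ is irreducible as an $\FF x$-module, so for any nonzero $m \in M$ the iterated brackets $\operatorname{ad}(x)^k(m)$ span $M$ and $\{x, m\}$ generates $H$; note $\operatorname{ad}(x)(m) \neq 0$, since otherwise $H$ would be abelian and $L^2 = A$ would be nilpotent, contradicting the hypothesis. If $H$ is the three-dimensional Heisenberg algebra, any two non-commuting non-central elements generate $H$.

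Next I would lift this to $L$. I fix a nonzero $a \in A$ and propose $g_1 = x$, $g_2 = m + a$ (or the analogous pair in the Heisenberg case), and let $K = \langle g_1, g_2 \rangle$. Then $[g_1, g_2] = [x, m] + [x, a] \in K$, while in the non-Lie case $[g_2, g_1] = [m, x] + [a, x] = -[x, m]$, because $AL = 0$ by Theorem~\ref{minimalnonstronglysolvablethm}; subtracting isolates $[x, m] \in M$ and $[x, a] \in A$ in $K$. Iterating recovers all of $M$ by the irreducibility above, hence $a = g_2 - m \in K$, and therefore $x, m, a \in K$. Bracketing $H \subseteq K$ against $a$ then produces all of $A$ by the irreducibility of $A$ as an $H$-module. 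Thus $K = L$.

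The hard part will be the clean bookkeeping of the iterated brackets — in particular, confirming that the $M$-part and $A$-part stay separable throughout the iteration in the Leibniz setting, and that the module-theoretic generation of $A$ by $a$ under the $H$-action really goes through. These are precisely the ingredients packaged by the recognition result of \cite{tworecogleib} which the authors invoke, bypassing case analysis in favor of a uniform criterion.
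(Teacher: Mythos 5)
There is a genuine gap at the very first step: Theorem~\ref{minimalnonstronglysolvablethm} applies only to \emph{$\phi$-free} algebras, whereas Proposition~\ref{minnonstrsolv2genprop} makes no such assumption. You cannot invoke the structural decomposition $L = A + H$ directly. To salvage your route you would have to pass to $L/\phi(L)$, show that it is still minimally non-strongly solvable (this needs strong solvability to behave like a saturated formation, i.e.\ that $L/\phi(L)$ strongly solvable forces $L$ strongly solvable), and then lift two-generation back to $L$ using the non-generator property of the Frattini ideal; none of this is addressed. A second, smaller problem is the closing paragraph: the result you cite from \cite{tworecogleib} is a global \emph{recognition} criterion (a solvable Leibniz algebra all of whose two-generated subalgebras are strongly solvable is itself strongly solvable), not a tool for separating the $M$-part and $A$-part in iterated brackets, and in the Heisenberg case your bookkeeping is in fact left undone (you obtain $z$ and $[x,a]$ in $K$, but extracting $a$, hence $y$, requires knowing how $H$ acts on $A$, which the structure theorem does not specify).

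The paper's proof runs in exactly the opposite direction and avoids all of this. Suppose $L$ is \emph{not} two-generated. Then every subalgebra generated by two elements is proper, hence strongly solvable by the minimality hypothesis; the two-recognizability of strong solvability (Corollary 1 of Theorem 2 of \cite{tworecogleib}) then forces $L$ itself to be strongly solvable, contradicting the hypothesis. This contrapositive argument needs neither $\phi$-freeness nor any structural information about $L$, which is why the paper can state the proposition without those hypotheses. Your explicit-generator idea could in principle recover the two-generation statement for the $\phi$-free algebras of Theorem~\ref{minimalnonstronglysolvablethm}, but as written it does not prove the proposition as stated.
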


\begin{proof}
If L is not two-generated, then all two-generated subalgebras are strongly solvable. 
Then the result follows from Corollary 1 of Theorem 2 of \cite{tworecogleib}. 
\end{proof}

\vs

\begin{proposition}
\label{propersubalg}
Let L be a solvable Leibniz algebra with each two-generated proper subalgebra strongly solvable. Then
	\begin{enumerate}
	\item L is either strongly solvable or two-generated, and
	\item Every proper subalgebra of L is strongly solvable.
	\end{enumerate}
\end{proposition}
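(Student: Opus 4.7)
The plan is to apply Proposition \ref{minnonstrsolv2genprop} together with a minimum-dimension argument. Both parts of the proposition will follow from a single observation: any subalgebra $M$ of $L$ that is minimally non-strongly solvable must be two-generated, and if such an $M$ is moreover a \emph{proper} subalgebra of $L$, then the hypothesis on $L$ forces $M$ to be strongly solvable --- a contradiction.

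For (1), I would suppose that $L$ is not strongly solvable, and then choose $M$ of minimum dimension among subalgebras of $L$ that fail to be strongly solvable. Since $L$ is solvable, so is $M$, and the dimension minimality makes every proper subalgebra of $M$ strongly solvable; thus $M$ is minimally non-strongly solvable. By Proposition \ref{minnonstrsolv2genprop}, $M$ is two-generated. If $M$ were a proper subalgebra of $L$, the hypothesis would force $M$ to be strongly solvable, a contradiction; hence $M = L$ and $L$ itself is two-generated.

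For (2), let $K$ be any proper subalgebra of $L$ and suppose for contradiction that $K$ is not strongly solvable. The same minimum-dimension argument, now applied inside $K$, produces a subalgebra $M \subseteq K$ that is minimally non-strongly solvable, and so two-generated by Proposition \ref{minnonstrsolv2genprop}. But $M \subseteq K \subsetneq L$, so $M$ is itself a two-generated proper subalgebra of $L$; the hypothesis then forces $M$ to be strongly solvable, the desired contradiction. Hence every proper subalgebra $K$ is strongly solvable.

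The argument is essentially a packaging exercise, and the real work is already done in Proposition \ref{minnonstrsolv2genprop}, which in turn rests on the companion-Lie-algebra reduction of the previous section. The only mild verification needed is that a subalgebra $M$ of minimum dimension among non-strongly-solvable subalgebras really is minimally non-strongly solvable, but this is immediate since every proper subalgebra of $M$ has strictly smaller dimension and is therefore strongly solvable by the choice of $M$. I do not foresee any serious obstacle beyond this bookkeeping.
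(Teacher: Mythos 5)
Your proposal is correct and follows essentially the same route as the paper: both parts reduce to the fact that a minimally non-strongly-solvable solvable subalgebra must be two-generated (Proposition \ref{minnonstrsolv2genprop}, resting on the two-recognizability of strong solvability), after which the hypothesis on two-generated proper subalgebras yields the contradiction. Your explicit minimum-dimension selection of $M$ just makes rigorous the existence of the minimal counterexample that the paper's terser proof takes for granted.
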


\begin{proof}
The proof of (1) follows the proof of \ref{minnonstrsolv2genprop}. 

\vs

We now prove (2). Let S be a minimally non-strongly solvable subalgebra of L. 
If S is two-generated, then S is strongly solvable by hypothesis. 
Hence, S is not two-generated. 
Thus, all two-generated subalgebras of S are strongly solvable, and S is strongly solvable also.
\end{proof}

\section{Supersolvable Leibniz Algebras}

A Leibniz algebra L is called \emph{supersolvable} if there exists a chain of ideals 0 = L$_0 \subset$ L$_1 \subset$ L$_2 \subset \hdots$ L$_{n-1} \subset$ L$_n$ = L, where L$_i$ is an $i$-dimensional ideal of L. 
In this section, supersolvability will be considered in the same manner as was strong solvability in the previous section. 
Let L be a solvable, $\phi$-free,minimally non-supersolvable Leibniz algebra. 
Supersolvable Leibniz algebras form a formation; hence, Lemma \ref{companionlie} applies. 
Then L = A + H as in Lemma \ref{companionlie}. 
Now, if L is Lie, then the structure of L has been determined in \cite{elduquevarea} and set forth in \cite{btv}. 
We state the result as 

\vs

\vs

\begin{theorem}
\label{btvsupersolvablethm}
Let L be a solvable, minimally non-supersolvable Lie algebra which is $\phi$-free. Then the candidates for L are:
	\begin{enumerate}
	\item If L is strongly solvable, L = A $\dotplus \langle x \rangle$, where A is the unique minimal ideal of L and $\dim$ A $>$ 1.
	\item If L is not strongly solvable, then F is of characteristic p $>$ 0, L has unique minimal ideal A with basis $\{e_1, \hdots, e_p\}$, and one of the following holds:
		\begin{enumerate}
		\item L = A + $\langle x, y \rangle$ with antisymmetric multiplication $xe_i = e_{i+1}$, \\ $ye_i = (\alpha + i) e_i$, with indices mod p, yx = x, and for all $a \in$ F, $a = t^p - t$ for some $t \in$ F, or
		\item L = A + $\langle x, y, z \rangle$ with anti-symmetric multiplication xe$_i$ = e$_{i+1}$, \\ ye$_i$ = (i + 1) e$_{i-1}$, ze$_i$ = e$_i$, with indices mod p, yx = z, xz = yz = 0, and F is perfect when p = 2.
		\end{enumerate}
	\end{enumerate}
\end{theorem}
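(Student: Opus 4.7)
The plan is to follow the classification approach of \cite{elduquevarea} and \cite{btv}. First I would apply the Lie-algebra version of the formation argument preceding Lemma \ref{companionlie}: supersolvable Lie algebras form a subalgebra-closed formation, so since $L$ is $\phi$-free, solvable, and minimally non-supersolvable, $L$ has a unique minimal ideal $A$, complemented by a subalgebra $H \cong L/A$ that is itself supersolvable by the minimality hypothesis. Since $\phi(L)=0$, the nilradical of $L$ equals $A$, and standard arguments show $A$ is abelian, self-centralizing, and hence an irreducible faithful $H$-module. This sets up the semidirect decomposition $L = A + H$ shared by all conclusions.

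Next I would split on whether $L$ is strongly solvable. In that case $L^2$ is nilpotent and contained in $A$, so $H \cong L/A$ is abelian. A minimality argument then forces $\dim H = 1$: if $x, y \in H$ were linearly independent, the proper subalgebra $A + \langle x\rangle$ would be supersolvable by minimality of $L$, forcing $A$ to decompose as an $\langle x\rangle$-module into one-dimensional invariant subspaces; doing the same with $y$ and invoking commutativity of $H$ together with irreducibility of $A$ under $H$ would give a contradiction. Hence $H = \langle x\rangle$, and necessarily $\dim A > 1$, else the flag $0 \subset A \subset L$ already witnesses supersolvability. This yields conclusion (1).

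For the non-strongly-solvable case, I would first deduce $\operatorname{char} \FF = p > 0$, since Cartan's solvability criterion in characteristic zero forces $L^2$ to be nilpotent. The central input is then the representation theory of supersolvable Lie algebras in positive characteristic: a faithful irreducible module for such an algebra has dimension dividing $p$, and since $\dim A = 1$ would make $L$ supersolvable, we must have $\dim A = p$. Choosing a generator $x \in H$ whose action cycles a basis of $A$ gives $xe_i = e_{i+1}$ (indices mod $p$); an additional diagonalizable element $y$ whose bracket with $x$ can be absorbed back into $\langle x\rangle$ yields the eigenvalue shift $ye_i = (\alpha + i) e_i$ and, via a descent argument on the scalar $\alpha$, the Artin--Schreier condition $a = t^p - t$ for all $a \in \FF$, giving case (2)(a). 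The alternative (2)(b) arises when $[y,x]$ is not of the form $\lambda x$ and instead produces a new central element $z$, forcing $H$ to be three-dimensional Heisenberg; the perfectness hypothesis at $p = 2$ enters in the normalization of scalar coefficients.

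The main obstacle is precisely the branching between (2)(a) and (2)(b) and the extraction of the field-theoretic hypotheses: deciding whether $[y, x]$ can be normalized back into $\langle x\rangle$ versus forcing a genuinely new dimension, and then pinning down exactly which scalars can be rescaled away in each branch, requires a careful analysis of the supersolvable complement $H$ together with its action on $A$, and is the detailed calculation carried out in \cite{elduquevarea}.
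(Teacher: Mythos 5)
The paper gives no proof of this theorem at all: it is quoted as an imported result (``the structure of L has been determined in \cite{elduquevarea} and set forth in \cite{btv}''), so the only comparison available is that both you and the authors ultimately rest on that citation. Your reduction steps are correct in outline --- the unique complemented minimal ideal $A$ from $\phi$-freeness with $A = N(L)$ abelian, self-centralizing, and irreducible and faithful under $H$; the strongly-solvable dichotomy forcing $H$ abelian and then one-dimensional in case (1); and characteristic $p>0$ in case (2) --- and you rightly identify that the substantive content ($\dim A = p$, the two possible shapes of $H$, and the Artin--Schreier and perfectness conditions) lives entirely in \cite{elduquevarea}, which is exactly where the paper leaves it too; the one small imprecision is that supersolvability of $A + \langle x \rangle$ yields a flag of invariant subspaces (hence an eigenvector for $x$, whose eigenspace is $H$-invariant by commutativity), not a direct-sum decomposition of $A$ into one-dimensional invariant subspaces, though your argument only needs the eigenvector.
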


The list of algebras in Theorem \ref{btvsupersolvablethm} are the possible minimally non-supersolvable Lie algebras, however, it is important to note that not all algebras of these forms have this property. 
However, if F is algebraically closed, the algebras in (2) are minimally non-supersolvable, while the algebra in (1) must be supersolvable. 
Hence, all minimally non-supersolvable Lie algebras over an algebraically closed field are as in (2).   

\vs

Now suppose that L is not a Lie algebra. 
Then L is as in Lemma \ref{companionlie}; 
L = A $\dotplus$ H, where A = Leib(L), and H is as in the lemma. 
Take C to be the companion Lie algebra defined in Lemma \ref{companionlie}. 
A is the minimal ideal in C, and C is $\phi$-free.

\vs

In Leibniz algebra L, H is supersolvable by assumption, and A is abelian. 
However, A is irreducible under the action of H acting on the left. 
Since L is not supersolvable by hypothesis, the dimension of A is greater than 1. 
The left action of H on A is the same in C as in L, and hence, $\dim$ A is greater than one and C is not supersolvable.

\vs

We show that all proper subalgebras of C are supersolvable, and hence, that C is a solvable, minimally non-supersolvable Lie algebra. 
Let K be a proper subalgebra of C. 
If C = A + K, then either A $\cap$ K = 0 or A $\cap$ K = A since A is also a minimal ideal in C. 
In the former case, K is isomorphic to H and hence, is supersolvable. 
In the latter case, K = C, which is a contradiction. 

\vs

Now suppose that A + K is a proper subalgebra of C. 
We show that A + K is supersolvable, and hence, that K is also supersolvable. 
We may assume that A $\subset$ K. 
Then as above, K = (K $\cap$ H) + A. 
Note that K $\cap$ H is the same in L and C. 

\vs

In L, the action of K $\cap$ H on A is simultaneously triangularizable. 
The left action of K $\cap$ H on A in C is the same as in L, and hence, K is supersolvable in Lie algebra C. 
Then C is minimal non-supersolvable and is as in Theorem \ref{btvsupersolvablethm}. 
Hence, we have the following theorem.

\vs

\vs

\begin{theorem}
\label{leibnizsupersolvthm}
Let L be a solvable, $\phi$-free, minimally non-supersolvable Leibniz algebra. 
If L is a Lie algebra, then L is as in Theorem \ref{btvsupersolvablethm}. If L is not Lie, then L is as in Theorem \ref{btvsupersolvablethm} with the added conditions that A = Leib(L) and AL = 0.
\end{theorem}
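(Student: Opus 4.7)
The plan is to argue by dichotomy on whether $L$ is itself a Lie algebra. If $L$ is Lie, the conclusion is immediate from Theorem \ref{btvsupersolvablethm}. The content lies in the non-Lie case, where the strategy is to pass to the companion Lie algebra $C$ of Lemma \ref{companionlie}, verify that $C$ inherits the hypotheses of Theorem \ref{btvsupersolvablethm}, and then transfer the structural description back to $L$.

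In the non-Lie case, Lemma \ref{companionlie} supplies the decomposition $L = A + H$, with $A = $ Leib$(L)$ the unique self-centralizing minimal ideal of $L$, $H$ a Lie subalgebra complementing $A$, and $C = A + H$ the companion Lie algebra under the bracket $[a+h, b+k] = [h,b] - [k,a] + [h,k]$. I would verify in sequence that $C$ is solvable; that $A$ remains the unique minimal ideal of $C$, because the left action of $H$ on $A$ is the same in $C$ as in $L$ and so $A$ is still self-centralizing; that $\phi(C) = 0$, since $H$ is a maximal subalgebra of $C$ not containing the unique minimal ideal $A$; that $C$ is not supersolvable, since $H$-irreducibility of $A$ is preserved and $\dim A > 1$; and that every proper subalgebra $K$ of $C$ is supersolvable via a case analysis: either $C = A + K$, forcing $K$ to be isomorphic to $H$ and hence supersolvable; or $A + K$ is itself proper, in which case one may assume $A \subseteq K$, write $K = (K \cap H) + A$, and use that $K \cap H$ acts simultaneously triangularizably on $A$ in both $L$ and $C$.

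With $C$ established as a solvable, $\phi$-free, minimally non-supersolvable Lie algebra, Theorem \ref{btvsupersolvablethm} yields the list of possible structures for $C$. Because the companion correspondence preserves $H$ as a Lie algebra and preserves the left $H$-action on $A$, the same structural description transfers back to $L$ on the $A$ and $H$ components. The added conditions $A = $ Leib$(L)$ and $AL = 0$ are immediate from Lemma \ref{companionlie}: the former is explicit there, and the latter follows from $A \subseteq $ Leib$(L)$, which forces right multiplication by any element of $L$ to annihilate $A$. The principal bookkeeping obstacle is confirming that the correspondence between subalgebras of $L$ containing $A$ and subalgebras of $C$ containing $A$ preserves triangularizability of the left action and the nilpotence needed for supersolvability --- but this is secured by the explicit agreement of left actions built into the companion bracket, so no new argument is required beyond what the preceding discussion assembles.
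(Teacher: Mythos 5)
Your proposal is correct and follows essentially the same route as the paper: the Lie case is immediate from Theorem \ref{btvsupersolvablethm}, and in the non-Lie case one passes to the companion Lie algebra $C$ of Lemma \ref{companionlie}, checks exactly the properties you list ($A$ the unique minimal ideal, $\phi(C)=0$, $\dim A>1$ so $C$ is not supersolvable, and the two-case analysis showing every proper subalgebra of $C$ is supersolvable via the preserved left $H$-action on $A$), then applies Theorem \ref{btvsupersolvablethm} to $C$ and transfers back. The added conditions $A=\mathrm{Leib}(L)$ and $AL=0$ are handled just as you say.
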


\vs

\vs

\begin{corollary}
If L is as in Theorem \ref{leibnizsupersolvthm} and F is algebraically closed, then the solvable, minimally non-supersolvable Leibniz algebras are precisely the algebras at characteristic p such that either

\begin{enumerate}
\item[(a.)] L = A + B where A = $(( e_1, \hdots, e_p ))$ is the unique minimal ideal which is abelian, B = $\langle x, y \rangle$ with $y e_i$ = $- ie_i$ and $x e_i$ = $- e_{i+1}$ (indices mod p), $yx$ = x, and either A = Leib(L) or L is Lie, or

\item[(b.)] L = A + B where A = $(( e_1, \hdots e_p ))$ is the unique minimal ideal which is abelian, B is the Heisenberg Lie algebra with basis $\{x, y, z\}$ and multiplication $y e_i$ = (i+1)$e_{i-1}$, $x e_i$ = $- e_{i+1}$, $z e_i$ = $e_i$, yx = z, and either A = Leib(L) or L is Lie. Again, the index action is mod p.
 
\end{enumerate}
\end{corollary}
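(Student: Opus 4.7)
The plan is to apply Theorem \ref{leibnizsupersolvthm} and then specialize to the algebraically closed setting using the remark following Theorem \ref{btvsupersolvablethm}. By Theorem \ref{leibnizsupersolvthm}, every solvable, $\phi$-free, minimally non-supersolvable Leibniz algebra $L$ falls into one of the cases of Theorem \ref{btvsupersolvablethm}, with the added Leibniz conditions $A = \mathrm{Leib}(L)$ and $AL = 0$ in the non-Lie case.

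First I would eliminate case (1) of Theorem \ref{btvsupersolvablethm}. The remark following that theorem states that over an algebraically closed field, the algebras of type (1) are themselves supersolvable and so cannot appear among the minimally non-supersolvable algebras. Thus only cases (2a) and (2b) of Theorem \ref{btvsupersolvablethm} can occur, and the characteristic must be $p > 0$.

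Next I would match these two cases to (a) and (b) of the corollary. In (2a) the hypothesis ``$a = t^p - t$ for some $t \in F$'' is automatic when $F$ is algebraically closed, since the Artin-Schreier polynomial $t^p - t - a$ always has a root. A reparametrization---relabeling the cyclic basis $\{e_i\}$, rescaling $x$, and using the freedom in the choice of $y$ modulo $\FF_p$---brings the structure constants into the normalized form $ye_i = -ie_i$, $xe_i = -e_{i+1}$, $yx = x$ of part (a). Case (2b) translates directly to (b) once one recognizes $\langle x, y, z\rangle$ with $yx = z$ and $xz = yz = 0$ as the three-dimensional Heisenberg Lie algebra.

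In each case the dichotomy ``$A = \mathrm{Leib}(L)$ or $L$ is Lie'' is inherited verbatim from Theorem \ref{leibnizsupersolvthm}. For the converse direction, I would check directly that each algebra of the two listed forms is solvable, $\phi$-free, and minimally non-supersolvable, using the same proper-subalgebra analysis (splitting into $C = A + K$ versus $A + K$ proper, and noting that $K \cap H$ acts triangularizably on $A$) as in the section preceding the corollary. The main obstacle I anticipate is the reparametrization step in (2a): one has to verify that the specific normal form stated really exhausts, up to isomorphism, all minimally non-supersolvable algebras arising from (2a) over algebraically closed $F$, and in particular that the parameter $\alpha$ and sign conventions of Theorem \ref{btvsupersolvablethm} can be absorbed without disturbing the relation $yx = x$.
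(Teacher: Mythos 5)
Your proposal is correct and follows exactly the route the paper intends: the corollary is stated without an explicit proof, being treated as an immediate consequence of Theorem \ref{leibnizsupersolvthm} together with the remark after Theorem \ref{btvsupersolvablethm} that over an algebraically closed field case (1) is supersolvable and the Artin--Schreier condition in (2a) is automatic. Your extra attention to the reparametrization absorbing the parameter $\alpha$ in (2a) and to the converse verification only makes explicit what the paper leaves implicit.
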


Several results similar to Theorem 3.5 in \cite{btv} are now obtained in the Leibniz algebra case. 
Note that if Leibniz algebra L is solvable and minimally non-supersolvable, then all two-generated proper subalgebras are supersolvable. 
If L is not two-generated, then all two-generated subalgebras are supersolvable, and hence L is supersolvable from Theorem 4 of \cite{tworecogleib}, a contradiction. Thus, we have the following.

\vs

\vs

\begin{theorem}
If L is solvable and minimal non-supersolvable, then L is two-generated.
\end{theorem}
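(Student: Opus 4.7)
The plan is to argue by contradiction, leveraging a two-generated recognition criterion for supersolvability from \cite{tworecogleib} together with the hypothesis of minimality.

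First I would suppose, for contradiction, that $L$ is not two-generated. Under this assumption, every subalgebra of $L$ generated by two elements is necessarily a proper subalgebra, because any generating set of $L$ must have at least three elements. Since $L$ is minimally non-supersolvable, every proper subalgebra of $L$ is supersolvable. In particular, every two-generated subalgebra of $L$ is supersolvable.

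Next I would invoke the Leibniz-algebra analogue of the two-generated recognition theorem, namely Theorem 4 of \cite{tworecogleib}, which asserts that a solvable Leibniz algebra all of whose two-generated subalgebras are supersolvable is itself supersolvable. Applying this to $L$, whose two-generated subalgebras have just been shown to be supersolvable, we conclude that $L$ is supersolvable. This directly contradicts the standing hypothesis that $L$ is (minimally) non-supersolvable, so the assumption that $L$ is not two-generated must fail, yielding the theorem.

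The proof is thus a short contradiction argument; the only conceptual step is verifying that the external hypothesis of \cite{tworecogleib} applies, and the main obstacle has already been handled in the literature by that theorem. No use of the companion Lie algebra $C$ or of the structural classification in Theorem \ref{leibnizsupersolvthm} is required here, since we only need the inheritance of supersolvability from two-generated subalgebras to the ambient algebra.
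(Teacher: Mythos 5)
Your argument is correct and matches the paper's own reasoning exactly: the paper likewise notes that if $L$ were not two-generated, every two-generated subalgebra would be proper and hence supersolvable, so Theorem 4 of \cite{tworecogleib} would force $L$ to be supersolvable, a contradiction. No differences worth noting.
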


\vs

\vs

\begin{theorem}
If every two-generated proper subalgebra of L is supersolvable, L itself is not supersolvable, and L is $\phi$-free, then L has the structure of one of the algebras in Theorem \ref{leibnizsupersolvthm}.
\end{theorem}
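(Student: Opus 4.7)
The plan is to deduce from the hypotheses that $L$ is in fact solvable and minimally non-supersolvable; Theorem \ref{leibnizsupersolvthm} then furnishes the structural conclusion. The argument parallels Proposition \ref{propersubalg} and the unnumbered observation preceding the previous theorem, and the key tool throughout is the two-generated recognition criterion, Theorem 4 of \cite{tworecogleib}, which the authors have already used to conclude that minimally non-supersolvable Leibniz algebras are two-generated.

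First I would show that $L$ is itself two-generated. If $L$ were not two-generated, every two-generated subalgebra of $L$ would be a proper subalgebra, hence supersolvable by hypothesis, and Theorem 4 of \cite{tworecogleib} would then force $L$ itself to be supersolvable, contradicting the assumption. Second, I would upgrade the hypothesis on proper two-generated subalgebras to supersolvability of every proper subalgebra of $L$: given a proper subalgebra $K$ of $L$, every two-generated subalgebra of $K$ is contained in $K$ and therefore is a proper subalgebra of $L$, hence supersolvable by hypothesis; applying Theorem 4 of \cite{tworecogleib} with $K$ as the ambient algebra yields that $K$ is supersolvable. In particular every proper subalgebra of $L$ is solvable, and by the two-generated recognition of solvability already invoked in Proposition \ref{minnonstrsolv2genprop} the algebra $L$ is solvable as well. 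Combining these observations, $L$ is a solvable, $\phi$-free, minimally non-supersolvable Leibniz algebra, and Theorem \ref{leibnizsupersolvthm} delivers the claimed structural description.

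The step I expect to require the most care is the derivation of solvability of $L$ itself: every two-generated proper subalgebra is supersolvable and hence solvable, but $L$ is a two-generated subalgebra of $L$ that is not proper, so the solvability of $L$ has to be obtained from the two-generated recognition principle rather than read off from the hypothesis directly. The two invocations of Theorem 4 of \cite{tworecogleib}---once to contradict the assumption that $L$ is not two-generated, and once to propagate supersolvability from proper two-generated subalgebras up to arbitrary proper subalgebras of $L$---are routine, and the final appeal to Theorem \ref{leibnizsupersolvthm} is immediate once $L$ has been identified as solvable and minimally non-supersolvable.
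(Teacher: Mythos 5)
Your proof is correct and follows essentially the same route as the paper: use the two-recognizability of supersolvability from \cite{tworecogleib} to promote the hypothesis on two-generated proper subalgebras to supersolvability of every proper subalgebra, so that $L$ is minimally non-supersolvable and Theorem \ref{leibnizsupersolvthm} applies. Your extra step verifying that $L$ is solvable (which Theorem \ref{leibnizsupersolvthm} requires but the paper's proof leaves implicit) is a welcome bit of added care, not a divergence in method.
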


\begin{proof}
Let S be a proper subalgebra of L. 
If S is two-generated, then it is supersolvable by assumption. 
If S is non two-generated, then every two-generated subalgebra of S is supersolvable by assumption. 
Since supersolvability is two-recognizeable \cite{tworecogleib}, S is also supersolvable. 
Thus, all proper subalgebras of L are supersolvable, and L is minimally non-supersolvable. 
Hence, Theorem \ref{leibnizsupersolvthm} applies. 
\end{proof}

\section{Triangulable Leibniz Algebras}

A Leibniz algebra L over a field F is triangulable on L-module M if when F is extended to K, the algebraic closure of F, K $\otimes$ M admits a basis such that the representing matrices of K $\otimes$ L are upper triangular. 
L is said to be nil on M if left multiplication on M by each x $\in$ L is nilpotent. 
Then L acts nilpotently on M \cite{jacobsonleib}. 
There is a maximal ideal of L that acts nilpotently on M.
This ideal is denoted by nil(L). 
If L$^2$ is contained in nil(L), then the same holds in the algebra and module over K. 
By Theorem 1 of \cite{triangulable}, in the algebraically closed case, L is triangulable on M if L$^2$ is contained in nil(L). 
Hence, in the general case, L is triangulable on M if L$^2$ is in nil(L). 
Note that these definitions still apply in the special case that L is a subalgebra of M. 
We state them in this context.

\vs

\vs

\begin{proposition}
\label{triangulabledefns}
A subalgebra L of Leibniz algebra M is triangulable on M if and only if L$^2$ is contained in nil(L). 
If L is an ideal of M, then L is triangulable on M if and only if L$^2$ is nilpotent. 
M is triangulable on itself if and only if M is strongly solvable.
\end{proposition}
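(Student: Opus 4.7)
The plan is to handle the three biconditionals in sequence, with the first doing the bulk of the work and the other two reducing to it.

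For the first biconditional, the $(\Leftarrow)$ direction is already in the discussion preceding the proposition, via Theorem~1 of \cite{triangulable} and scalar extension. For $(\Rightarrow)$, the plan is to extend scalars to the algebraic closure $K$ of $F$, pick a basis of $K\otimes M$ making every left multiplication from $K\otimes L$ upper triangular, and then invoke the Leibniz identity $a(bc)=(ab)c+b(ac)$, rewritten as $L_{ab}=[L_a,L_b]$, to conclude that the operators representing elements of $L^2$ are linear combinations of commutators of upper triangular matrices, hence strictly upper triangular, hence nilpotent. This shows $L^2$ acts nilpotently on $M$; as $L^2$ is an ideal of $L$, the maximality built into $\mathrm{nil}(L)$ gives $L^2\subseteq \mathrm{nil}(L)$.

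For the second biconditional, the first part reduces the task to showing $L^2\subseteq\mathrm{nil}(L)$ is equivalent to $L^2$ being a nilpotent Leibniz algebra, under the ideal hypothesis on $L$. The forward direction does not use the ideal hypothesis: $L^2$ acts nilpotently on $M$, hence on the subspace $L^2\subseteq L\subseteq M$, and Engel's theorem for Leibniz algebras then makes $L^2$ nilpotent. For the converse, assuming $(L^2)^N=0$ in the lower central series, I would prove by induction on $k\geq 1$ that $L_x^{k+1}(M)\subseteq (L^2)^k$ for every $x\in L^2$, using $L\cdot M\subseteq L$ for the base case $k=1$ (so that $x(xM)\subseteq xL\subseteq L^2$) and $x\cdot (L^2)^{k-1}\subseteq L^2\cdot (L^2)^{k-1}=(L^2)^k$ for the inductive step. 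Taking $k=N$ gives $L_x^{N+1}=0$, so $L_x$ is nilpotent on $M$, whence $L^2\subseteq\mathrm{nil}(L)$.

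The third biconditional is then the special case $L=M$ of the second, using the definition of strong solvability ($M^2$ nilpotent). The main obstacle is the $(\Leftarrow)$ half of the second biconditional, where one must track how iterated left multiplications by elements of $L^2$ push elements of $M$ into successively deeper terms of the lower central series of $L^2$. This is precisely where the ideal hypothesis $L\cdot M\subseteq L$ is indispensable: without it the first application of $L_x$ could already leave $L$ and the induction would never start. The other ingredients---Engel's theorem for Leibniz algebras, the identity $L_{ab}=[L_a,L_b]$, the upper/strictly-upper triangular dichotomy, and scalar extension---are standard and already in use in the surrounding text.
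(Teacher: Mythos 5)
Your argument is correct, but it is considerably more than the paper offers: the paper states this proposition with no proof at all, treating it as a summary of the preceding paragraph, which only establishes the ``if'' half of the first equivalence (via Theorem 1 of \cite{triangulable} plus scalar extension) and leaves the rest implicit. Your $(\Rightarrow)$ direction---triangularize over the algebraic closure and use the operator identity $L_{ab}=[L_aL_b]-[L_bL_a]$ coming from the left Leibniz identity to see that $L^2$ acts by strictly upper triangular, hence nilpotent, matrices---is the standard completion and is sound, as is the appeal to the (unique) maximality of $\mathrm{nil}(L)$, which is justified by the sum-of-nil-ideals proposition appearing just after this one in the paper. Your reduction of the second equivalence to ``$L^2\subseteq\mathrm{nil}(L)$ iff $L^2$ is nilpotent'' is also right, and you correctly isolate the only place the ideal hypothesis is needed: the base case $xM\subseteq LM\subseteq L$ of the induction $L_x^{k+1}(M)\subseteq (L^2)^k$, after which Engel's theorem for Leibniz algebras handles the forward direction. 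The third equivalence as the case $L=M$ is immediate. In short, where the paper delegates everything to \cite{triangulable}, you have supplied a complete self-contained argument; the only mild caution is that your use of $L_{ab}=[L_a,L_b]$ on $M$ relies on $M$ being a genuine Leibniz module over the subalgebra $L$ via left multiplication, which holds here because $L\subseteq M$ and the Leibniz identity in $M$ applies verbatim.
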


The following two propositions follow exactly as in their Lie algebra versions, with left multiplication replacing ad in the proofs as shown in Lemma 4.1 and Lemma 4.5 of \cite{btv}.

\vs

\vs

\begin{proposition}
Let L be a Leibniz algebra, and let S and T be subalgebras of L that are nil on S such that S is contained in the normalizer of T. 
Then S + T is nil on L. 
\end{proposition}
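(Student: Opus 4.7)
The plan is to follow the proof of Lemma~4.1 of \cite{btv}, the Lie algebra version of this statement, with the adjoint representation replaced throughout by the left-regular representation $x \mapsto L_x$. The algebraic fact that makes the substitution work is the left Leibniz identity $a(bc) = (ab)c + b(ac)$, which asserts exactly that $L_a$ is a derivation of $L$; evaluating $L_a L_b - L_b L_a$ on an arbitrary element then yields the commutator identity $[L_a, L_b] = L_{ab}$, the same relation that drives the Lie-case argument.

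First I would verify that $S + T$ is a subalgebra of $L$: since $S$ lies in the normalizer of $T$, both $ST \subseteq T$ and $TS \subseteq T$, so the Leibniz product of any two elements of $S + T$ stays in $S + T$. Next, fixing $x = s + t$ with $s \in S$ and $t \in T$, I would show that $L_x = L_s + L_t$ is a nilpotent operator on $L$. Because $[L_s, L_t] = L_{st}$ with $st \in T$, the associative subalgebra of $\mathrm{End}(L)$ generated by $L_S \cup L_T$ contains the two-sided operator ideal generated by $L_T$, and every commutator of an $L_s$ with an $L_t$ lies in that ideal. Expanding $(L_s + L_t)^n$ into words in $L_s$ and $L_t$ and pushing all $L_t$ factors to the right modulo this ideal, one uses the hypothesized nilpotency of each $L_s$ and each $L_t$ on $L$ to see that every word of sufficient length vanishes, hence $L_x^n = 0$ for $n$ large.

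The main obstacle is the combinatorial bookkeeping in the expansion and the verification that the image of $T$ under the left-regular representation acts nilpotently as a Lie set on $L$, so that long words really do kill everything. But because $[L_a, L_b] = L_{ab}$ holds in a left Leibniz algebra in direct parallel with $[\operatorname{ad} a, \operatorname{ad} b] = \operatorname{ad}[a,b]$ in the Lie case, the manipulations of \cite[Lemma~4.1]{btv} transcribe essentially verbatim, and no genuinely new difficulty should arise.
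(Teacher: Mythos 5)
Your proposal is correct and matches the paper's approach exactly: the paper itself proves this proposition by citing Lemma 4.1 of \cite{btv} and replacing $\operatorname{ad}x$ with left multiplication $L_x$ throughout, which is precisely your plan. Your added observation that the left Leibniz identity gives $L_aL_b - L_bL_a = L_{ab}$, so the commutator bookkeeping from the Lie case transcribes verbatim, is the correct justification for why that substitution works.
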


\vs

\vs

\begin{proposition}
If S is a subalgebra of L such that $\phi$(L) is contained in S, then nil (S/$\phi$(L)) = nil(S)/$\phi$(L).
\end{proposition}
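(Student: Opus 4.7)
The plan is to mirror the proof of Lemma 4.5 of \cite{btv} for the Lie case, substituting the left multiplication operator $L_x$ for the adjoint action $\mathrm{ad}\,x$, as the introductory remark to this pair of propositions directs. The two essential inputs are the nilpotence of the Frattini ideal $\phi(L)$ of a Leibniz algebra (proved in \cite{fratleib}) and the Leibniz analogue of Engel's theorem (\cite{jacobsonleib}), which converts pointwise nilpotent left multiplication on a module into a joint nilpotent action realized by a descending filtration.

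First I would verify that $\phi(L) \subseteq \mathrm{nil}(S)$. Since $\phi(L)$ is an ideal of $L$ contained in $S$, it is also an ideal of $S$; its nilpotence forces iterates $L_y^k$ for $y \in \phi(L)$ to land in successively higher powers of $\phi(L)$, which eventually vanish. Hence $\mathrm{nil}(S)/\phi(L)$ is a well-defined ideal of $S/\phi(L)$. The inclusion $\mathrm{nil}(S)/\phi(L) \subseteq \mathrm{nil}(S/\phi(L))$ is then immediate, since an operator $L_x$ that is nilpotent on $S$ descends to a nilpotent operator on the quotient; by maximality of $\mathrm{nil}(S/\phi(L))$ we get the containment.

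For the reverse inclusion, let $N$ be the preimage in $S$ of $\mathrm{nil}(S/\phi(L))$; then $N \supseteq \phi(L)$ is an ideal of $S$, and I must show that each $x \in N$ has $L_x$ nilpotent on $S$. Nilpotence of $L_x$ modulo $\phi(L)$ gives $L_x^n(S) \subseteq \phi(L)$ for some $n$, so the remaining task is to make $L_x$ eventually vanish on $\phi(L)$ itself and then stitch the two stages together into one nilpotent power. The main obstacle, and the heart of the proof, is exactly this upgrade: one must combine the nilpotent action of $N/\phi(L)$ on $S/\phi(L)$ with the intrinsic nilpotence of $\phi(L)$ to produce a filtration of $S$ refining $0 \subseteq \phi(L) \subseteq S$ on which every $L_x$, $x \in N$, is strictly lower triangular. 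Following the Lie template, this is carried out by passing to the descending powers $\phi(L) \supseteq \phi(L)^2 \supseteq \cdots \supseteq 0$ and observing that each factor $\phi(L)^i / \phi(L)^{i+1}$ is an $S/\phi(L)$-module on which $N/\phi(L)$ acts nilpotently, then applying the Leibniz Engel theorem on each graded piece. Once this refined filtration is in hand, $N$ is a nil ideal of $S$ and so $N \subseteq \mathrm{nil}(S)$, completing the proof.
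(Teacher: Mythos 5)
Your reduction to showing that the preimage $N$ of $\mathrm{nil}(S/\phi(L))$ acts nilpotently on $\phi(L)$ is the right target, and the easy containments ($\phi(L)\subseteq\mathrm{nil}(S)$ and $\mathrm{nil}(S)/\phi(L)\subseteq\mathrm{nil}(S/\phi(L))$) are handled correctly. But the step you lean on to close the argument --- that each graded piece $\phi(L)^i/\phi(L)^{i+1}$ is a module on which $N/\phi(L)$ acts nilpotently --- is exactly what needs to be proved, and nothing in your setup delivers it. The intrinsic nilpotence of $\phi(L)$ controls left multiplication by elements \emph{of} $\phi(L)$, not by elements of $N$: if $\phi(L)$ happens to be abelian, your filtration collapses to $\phi(L)\supseteq 0$ and the claim becomes ``every $x\in N$ acts nilpotently on $\phi(L)$,'' which is the full content of the hard direction. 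A priori an $x$ with $L_x$ nilpotent modulo $\phi(L)$ could act on $\phi(L)$ with a nonzero eigenvalue, and no Engel-type theorem applies because pointwise nilpotence on the graded pieces has not been established.

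The missing ingredient is the non-generator property of the Frattini ideal, and this is how the paper (following the Lie template of \cite{btv}) actually argues. For $x$ in the preimage $J$ of $\mathrm{nil}(S/\phi(L))$, take the Fitting decomposition $L = L_0(x)\oplus L_1(x)$ of $L_x$ acting on $L$. Since $L_x$ is nilpotent modulo $\phi(L)$, some power $L_x^n(L)$ lies in $\phi(L)$, hence the Fitting one-component $L_1(x)$ lies in $\phi(L)$ and $L = \phi(L) + L_0(x)$. The Fitting null component $L_0(x)$ is a subalgebra, and a subalgebra supplementing the Frattini ideal must equal $L$; therefore $L_0(x)=L$, $L_1(x)=0$, and $L_x$ is genuinely nilpotent on $L$. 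It is this Frattini-supplementation step that kills the hypothetical nonzero eigenvalue on $\phi(L)$; it cannot be replaced by a filtration argument that uses only the nilpotence of $\phi(L)$ as an algebra.
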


\begin{proof}
Clearly $\phi$(L) is contained in nil(L) since $\phi$(L) is a nilpotent ideal in L. 
Let nil(S/$\phi$(L)) = J/$\phi$(L). 
Clearly nil(S) is contained in J. 
Let x $\in$ J. 
Now L$_x$ acts nilpotently on L/$\phi$(L), and L = $\phi$(L) + L$_0$(x), the Fitting null component of L$_x$ acting on L. 
Since L$_0$(x) is a subalgebra of L supplementing the Frattini ideal, L$_0$(x) is equal to L. 
This holds for all such x, and hence, J is contained in nil(S), and the result holds.
\end{proof}

It is known that a solvable Leibniz algebra is triangulable on itself if all two-generated subalgebras of L are triangulable \cite{triangulable}. 
In \cite{btv}, Lie algebras all of whose \emph{proper} two-generated subalgebras are triangulable  are similarly investigated. 
Our purpose is to find Leibniz algebra analogues to this and related ideas. 

\vs

\vs

\begin{theorem}
\label{2gentriangulable}
Let L be a solvable Leibniz algebra such that each two-generated proper subalgebra is triangulable on L. 
Then L is triangulable. 
\end{theorem}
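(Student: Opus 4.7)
The plan is to argue by cases on whether $L$ is two-generated. If $L$ is not two-generated, then for every $x, y \in L$ the subalgebra $\langle x, y\rangle$ is automatically proper and hence triangulable on $L$ by hypothesis; since triangulability of $S$ on $L$ forces triangulability of $S$ on itself (because $\mathrm{nil}(S$ on $L) \subseteq \mathrm{nil}(S$ on $S)$), the two-generated characterization cited from \cite{triangulable} would apply directly to give that $L$ is triangulable.

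If $L$ is two-generated I would argue by contradiction, assuming $L$ is not triangulable. First I would reduce to the $\phi$-free case: any two-generated proper subalgebra $\bar S = \langle \bar a, \bar b\rangle$ of $L/\Phi(L)$ lifts to $T = \langle a, b\rangle \subseteq L$ which is proper (elements of $\Phi(L)$ are non-generators), so $T$ is triangulable on $L$ by hypothesis and $\bar S$ inherits triangulability on $L/\Phi(L)$; triangulability of $L$ and of $L/\Phi(L)$ are equivalent since $\Phi(L) \subseteq \mathrm{nil}(L)$ and $\mathrm{nil}(L/\Phi(L)) = \mathrm{nil}(L)/\Phi(L)$ by the previous proposition. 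With $\phi(L) = 0$ I would then show every proper subalgebra $S$ of $L$ is strongly solvable: every two-generated $T \subseteq S$ is proper in $L$ (else $T = L$ would force $S = L$), hence triangulable on $L$ and on itself, and so strongly solvable, and the cited two-generated result applied to $S$ makes $S$ strongly solvable. Since $L$ itself is not strongly solvable (equivalent to non-triangulable by Proposition \ref{triangulabledefns}), $L$ is a solvable, $\phi$-free, minimally non-strongly-solvable Leibniz algebra, so Theorem \ref{minimalnonstronglysolvablethm} gives $L = A + H$ with $A$ the unique minimal ideal, $A^2 = 0$, and $H$ either $M \dotplus \FF x$ or the three-dimensional Heisenberg algebra; in either case $H$ is a two-generated proper subalgebra of $L$.

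The desired contradiction will come from showing $H$ is not triangulable on $L$. Decomposing $L = A \oplus H$ as vector spaces, each $L_h$ for $h \in H$ is block-diagonal (since $H$ is a subalgebra and $A$ an ideal), while each $L_m$ for $m \in A$ is strictly block upper-triangular with zero diagonal (using $A^2 = 0$ and, in the non-Lie case, $AL = 0$). For any $z = m + h \in HA + H^2 \supseteq L^2$ the matrix of $L_z$ is then block upper-triangular with diagonal blocks $L_h|_A$ and $L_h|_H$; because $h \in H^2$ and $H^2$ is nilpotent ($H$ is strongly solvable), $L_h|_H$ is nilpotent, so $L_z$ is nilpotent on $L$ iff $L_h|_A$ is nilpotent on $A$. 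If every $h \in H^2$ acted nilpotently on $A$, every such $L_z$ would be nilpotent, which by Engel \cite{jacobsonleib} would force $L^2$ to act nilpotently on $L$ and hence $L$ to be strongly solvable---a contradiction. Therefore some $h \in H^2$ acts non-nilpotently on $L$, so $H^2 \not\subseteq \mathrm{nil}(H$ on $L)$ and $H$ is not triangulable on $L$, the required contradiction. The main obstacle is this final step of localizing the non-nilpotent action inside $H^2$; the block-triangular decomposition of $L_z$ is what makes it work.
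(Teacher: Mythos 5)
Your proposal is correct and follows essentially the same route as the paper: reduce to the $\phi$-free case, show every proper subalgebra is strongly solvable (the paper cites Proposition \ref{propersubalg} for this), invoke Theorem \ref{minimalnonstronglysolvablethm} to write $L=A+H$, and exploit that the two-generated subalgebra $H$ ($=B$ in the paper) must be triangulable on $L$, which forces $H^2$ to act nilpotently on $A$ and hence $L^2$ to be nilpotent. The only differences are presentational --- you run the final contradiction in the reverse direction (non-strong-solvability implies $H$ is not triangulable, rather than triangulability of $H$ implies strong solvability) and you spell out the block-triangular computation and the passage to $L/\Phi(L)$ that the paper leaves implicit.
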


\begin{proof}
Each two-generated proper subalgebra of L is strongly solvable. 
Then by Proposition \ref{propersubalg}, every proper subalgebra of L is strongly solvable, as is each proper subalgebra of L* = L/$\phi$(L).

\vs

If L is not triangulable, then L is not strongly solvable, and neither is L*. 
Thus, Theorem \ref{minimalnonstronglysolvablethm} applies, L$^*$ = A + B as in the theorem, and (L*)$^2$ = A + B$^2$. 
B is two-generated. 
Hence B is triangulable on L*. 
Thus, B$^2$ acts nilpotently on L* and hence, on A. 
Thus, (L*)$^2$ is nilpotent and L* is strongly solvable, a contradiction.
\end{proof}

It is interesting to note that L is triangulable if and only if it is strongly solvable. 
If all two-generated subalgebras are triangulable on L, then L is triangulable (Theorem \ref{2gentriangulable}). 
However, if all two-generated subalgebras are strongly solvable, this does not guarantee that L is triangulable since a subalgebra can be strongly solvable without being triangulable on the algebra. 
The algebras in Theorem \ref{minimalnonstronglysolvablethm} are of this type. 

\vs

\vs

\begin{theorem}
If L is minimally non-triangulable (every proper subalgebra of L is triangulable on L, but L itself is not), then L is two-generated and L/$\phi$(L) is simple.
\end{theorem}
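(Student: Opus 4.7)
The plan has two parts: two-generatedness is derived as an application of Theorem~\ref{2gentriangulable}, and the simplicity of $L/\phi(L)$ is obtained by identifying $L$ as minimally non-strongly solvable and applying Theorem~\ref{minimalnonstronglysolvablethm}.

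For two-generatedness, I argue by contradiction. If $L$ were not two-generated, then every two-generated subalgebra of $L$ would be proper and hence, by hypothesis, triangulable on $L$. Theorem~\ref{2gentriangulable} would then force $L$ itself to be triangulable, contradicting minimal non-triangulability.

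Next, I would observe that $L$ is minimally non-strongly solvable. Every proper subalgebra $K$ is triangulable on $L$, so $K^2$ acts nilpotently on $L$; in particular $K^2$ is nilpotent, and $K$ is strongly solvable. Meanwhile, $L$ itself cannot be strongly solvable, since by Proposition~\ref{triangulabledefns} that would be the same as triangulability of $L$ on itself. Letting $L^\ast := L/\phi(L)$, which is $\phi$-free and still minimally non-strongly solvable (strong solvability descends through and lifts back across $\phi(L)$, a nilpotent ideal), Theorem~\ref{minimalnonstronglysolvablethm} provides a semidirect decomposition $L^\ast = A + H$ with $A$ the unique minimal ideal and $H$ one of the two prescribed forms.

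The main obstacle is to parlay the triangulability hypothesis---which is strictly stronger than strong solvability of each proper subalgebra---into simplicity of $L^\ast$. Any hypothetical nontrivial proper ideal of $L^\ast$ distinct from $A$ would, by the semidirect structure, project to a proper ideal of $H$; by lifting this to a proper subalgebra of $L$ containing $\phi(L)$ and applying the triangulability-on-$L$ hypothesis to it, I would seek a contradiction to the explicit (non-nilpotent) left action of the relevant part of $H$ on $A$. This forces the case analysis distinguishing $H = M \dotplus \FF x$ from the three-dimensional Heisenberg algebra, and verifying in each case that a would-be intermediate ideal fails the triangulability test is where I expect the substantive work to lie.
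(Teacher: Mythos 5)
There is a genuine gap, and it stems from missing the pivotal first observation of the argument: $L$ is \emph{not} solvable. This follows from Theorem~\ref{2gentriangulable} read contrapositively --- if $L$ were solvable, its two-generated proper subalgebras being triangulable on $L$ would force $L$ itself to be triangulable, contradicting the hypothesis. Once you know $L$ (hence $L^* = L/\phi(L)$) is non-solvable, your entire second half collapses: Theorem~\ref{minimalnonstronglysolvablethm} applies only to \emph{solvable} minimally non-strongly-solvable algebras, so there is no decomposition $L^* = A + H$ with $A$ a proper minimal ideal to work with. Indeed such a decomposition is incompatible with the conclusion you are trying to reach, since a simple algebra has no proper nonzero ideal $A$ at all. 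Your proposed case analysis on $H = M \dotplus \FF x$ versus the Heisenberg algebra is therefore chasing a structure that cannot be present, and you acknowledge you have not carried it out.

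The correct route to simplicity is quite different. From non-solvability of $L^*$ one gets that $L^*$ is not triangulable (else $(L^*)^2$ nilpotent would make $L^*$ solvable), while every proper subalgebra $S^*$ of $L^*$ is strongly solvable because $S^2 \subseteq \mathrm{nil}(S)$ passes to the quotient. Then one shows $\mathrm{nil}(L^*) = 0$: if not, since $\phi(L^*) = 0$ the nilradical is complemented by a proper, hence strongly solvable, subalgebra $T$, and $L^* = \mathrm{nil}(L^*) + T$ would be solvable --- contradiction. So $L^*$ is semisimple, and any proper ideal would be a proper subalgebra, hence strongly solvable, hence solvable, which is impossible; thus $L^*$ is simple. (Separately, your two-generatedness argument also invokes Theorem~\ref{2gentriangulable}, whose hypothesis includes solvability of $L$; since $L$ is not solvable, that step needs repair as well, e.g.\ by first concluding from two-recognizability that a non-two-generated $L$ would be strongly solvable and hence solvable before applying Theorem~\ref{2gentriangulable}.)
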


\begin{proof}
By Theorem \ref{2gentriangulable}, L is not solvable. 
If L* = L/$\phi$(L) is triangulable, then (L*)$^2$ is nilpotent and L* is solvable, a contradiction. 
Thereofre, L* is not triangulable, and hence not strongly solvable.

\vs

Consider a proper subalgebra S of L and the corresponding proper subalgebra S* = S/$\phi$(L) of L*. 
S is triangulable on L, hence S$^2$ is contained in nil(S) and (S*)$^2$ is contained in nil(S*). 
Hence, S* is strongly solvable. 

\vs 

Note that the nilradical of L* is not 0. 
Since $\phi$(L*) = 0, nil(L*) is complemented in L* by a subalgebra T \cite{fratleib}, which is strongly solvable. 
Hence, L* = nil(L*) + T is solvable, a contradiction. 
Thus, L* is semisimple. 
If L* were to contain a proper ideal, that ideal would be strongly solvable and hence solvable, a contradiction. 
Thus, L* contains no proper ideals, and L* is simple. 

\vs

\end{proof}


\bibliography{leibnizbib}
\bibliographystyle{plain}

\end{document}